\theoremstyle{plain}
\newtheorem{proposition}{Proposition}[section]
\theoremstyle{definition}
\newtheorem{definition}{Definition}[section]
\newtheorem{example}{Examples}[section]
\theoremstyle{remark}
\newtheorem{remark}{Remark}
\theoremstyle{corollary}
\newtheorem{corollary}{Corollary}[section]
\numberwithin{equation}{section}
\begin{document}
\setcounter{page}{1}
\thispagestyle{empty}
\vskip-0.5cm
\noindent {\scriptsize \bf JIRSS (Year) \\
Vol. xx, No. x, pp xx-xx\\
DOI:000000000000000000000000000000}

\markboth{~~\hrulefill~~ \small Buono et al.} 
{\small Dispersion indices based on uncertainty measures  ~~\hrulefill~~ }

\topmargin=0mm
\vspace{2cm}
{\Large \bf \noindent Dispersion indices based on Kerridge inaccuracy and Kullback-Leibler divergence}\\[0.7cm]
{\bf \noindent Francesco Buono $^1$, Camilla Calì $^2$, Maria Longobardi $^2$}\\ \\ 
{\small  $^1$  Dipartimento di Matematica e Applicazioni ``Renato Caccioppoli'', Università degli Studi di Napoli Federico II, Naples, Italy; \\
$^2$ Dipartimento di Biologia, Università degli Studi di Napoli Federico II, Naples, Italy. }
\renewcommand{\thefootnote}{}
\footnotetext{\hskip-0.6cm F. Buono (\Letter) (francesco.buono3@unina.it), C. Calì (camilla.cali@unina.it), M. Longobardi (maria.longobardi@unina.it).}

\vspace{0.7cm}
{\noindent \bf Abstract.}
The concept of varentropy has been recently introduced as a dispersion index of the reliability of measure of information. In this paper, we introduce new measures of variability for two measures of uncertainty, the Kerridge inaccuracy measure and the Kullback-Leibler divergence. These new definitions and related properties, bounds and examples are presented. Finally we show an application of Kullback-Leibler divergence and its dispersion index using the mean-variance rule. \\[2mm]
{\noindent \bf Keywords.} Kerridge inaccuracy measure, Kullback-Leibler divergence, Varentropy. \\[2mm] 
{\noindent \bf MSC:} 62N05; 60E15; 94A17.

\renewcommand{\thefootnote}{\arabic{footnote}}
\setcounter{footnote}{0}

\vspace*{0.3cm}

\section{Introduction}

Let $X$ be a non negative and absolutely continuous random variable with cumulative distribution function (cdf) $F$ and probability density function (pdf) $f$. \cite{shannon} introduced a measure of uncertainty as the average level of information associated to the random variable $X$. This is known as Shannon entropy or differential entropy and is defined as
\begin{equation}
\nonumber
H(X)=\mathbb E_f[-\log f(X)]=-\int_0^{+\infty} f(x)\log f(x) dx,
\end{equation}
where $\log$ is the natural logarithm. Since then, several properties of Shannon entropy were studied and generalizations of this measure were introduced. We can observe that the Shannon entropy is position-free, in the sense that $X$ and $X+b$, with $b\in\mathbb R$, have the same entropy. 


The notion of differential entropy has been extended to study the discrepancy between two distributions. In the context of measures discussed in this paper, $f$ and $g$ are two pdf's associated with a single random variable $X$ in problems in which $f$ is the pdf of the ``true” distribution of
$X$ and $g$ is the pdf suggested by the results of an experiment (\cite{kerridge}). In the other case, $f$ is suitable to be selected since it is closest to a reference pdf
$g$ (\cite{kullback}). For clarity of presentation, let us consider two absolutely continuous non negative random variables $X$ and $Y$ with cdf's $F, G$ and pdf’s $f, g$, respectively. If $F$ is the distribution corresponding to the observations and $G$ is the distribution assigned by the experimenter, then the inaccuracy measure of $X$ and $Y$ (also named cross entropy of $Y$ on $X$ or relative distance
between $X$ and $Y$) is given by \cite{kerridge}
\begin{equation}
\label{eqki}
I(f;g)\equiv H_f(g)\coloneqq\mathbb E_f[-\log g(X)]=-\int_0^{+\infty} f(x) \log g(x) dx.
\end{equation}
As in the previous definition, the inaccuracy is an extension of the entropy $H(X)$.
This measure of uncertainty has been widely studied in the literature in order to adapt it to different contexts (see for instance \cite{ghosh, khorashadizadeh, kundu}). Moreover, \cite{taneja} introduced and studied the weighted version of inaccuracy that is a shift dependent measure of uncertainty, whereas the study of residual and past lifetime distributions through the inaccuracy is provided in \cite{tanejak} and \cite{kumar}, respectively.

As an information distance between two random variables $X$ and $Y$, \cite{kullback} proposed a directed divergence defined as
\begin{equation}
\label{eqkl}
K(f:g)=\mathbb E_f\left[\log \frac{f(X)}{g(X)}\right]=\int_0^{+\infty} f(x) \log \frac{f(x)}{g(x)} dx ,
\end{equation}
it is also known as information divergence, information gain, relative entropy or discrimination measure.  The Kullback-Leibler divergence is a measure of the similarity (closeness) between the two distributions and it plays an important role in information theory, reliability and other related fields. Several extensions of this measure have been proposed in the literature, for instance, one may refer to \cite{park, sunoj}. Moreover, we remark that the Kullback-Leibler divergence is non negative and equal to 0 if and only if $X$ and $Y$ are identically distributed. This characterization
property allows to use the estimated Kullback-Leibler information as a goodness of fit test statistic, see \cite{arizono, balakrishnan} for more details on this topic. Finally, the Kullback-Leibler divergence and the inaccuracy are related by the following relation
\begin{equation}
\label{rel}
K(f:g)=I(f;g)-H(X).
\end{equation}

Recently, the study of the variability of measures of information captured the interest of researchers. In fact, a dispersion index is useful to understand the reliability of the measure. In this perspective, \cite{fradelizi} study the concept of varentropy defined by
\begin{equation}
\label{varentropy}
VarH(X)\coloneqq Var_f[-\log f(X)]= \int_0^{+\infty} f(x)\log^2 f(x) dx- [H(X)]^2,
\end{equation}
whereas \cite{goodarzi} provide a useful bound of it. It is clear that the notation $VarH(X)$ is only a way to write the varentropy since it is not the variance of the entropy.

In this paper, we study the variability of the measures of uncertainty recalled above. In fact, we pointed out that those measures can be defined as expectations. Hence, we can evaluate their dispersion through the variance, in the sense that a measure with a lower level of variance can be assumed as more reliable. More precisely, the paper is organized as follows. In Section 2 and in Section 3 we introduce a dispersion index of Kerridge inaccuracy and Kullback-Leibler divergence, respectively. For these new definitions we provide properties, bounds and examples. In section 4 we use the mean-variance rule in order to apply the dispersion index of Kullback-Leibler divergence to some illustrative examples.

\section{Varinaccuracy}

The inaccuracy can be expressed in terms of the expectation of $-\log g(X)$ (see \eqref{eqki}) and for this reason it is useful to study the variance of this random variable. In the following definition, we introduce the varinaccuray as a dispersion index based on the inaccuracy, also known as cross entropy. 

\begin{definition}
\label{def1}
Let $X$ and $Y$ be two non negative random variables with pdf's $f$ and $g$, respectively. The varinaccuracy of $X$ and $Y$ can be defined as 
\begin{eqnarray}
\nonumber
VarI(f;g)&\coloneqq& Var_f [-\log g(X)] \\
\label{eqvarki}
&=&\int_0^{+\infty} f(x) \log^2 g(x) dx-\left[I(f;g)\right]^2.
\end{eqnarray}
\end{definition}

Also in this definition, $VarI(f;g)$ does not represent the variance of $I(f;g)$ but it is only a notation. Of course, if $X$ and $Y$ are identically distributed, then, as the inaccuracy reduces to Shannon entropy, the varinaccuracy reduces to the well-known varentropy \eqref{varentropy}.

\begin{remark}
We specify that Definition \ref{def1} can be given in a more general context omitting the non-negativity assumption. In this case, all integrals have to be understood as extended to the common support of $X$ and $Y$.
\end{remark}

Now, we give some examples of evaluation of varinaccuracy for different kinds of distributions.

\begin{example}
\label{ex1}
Consider $X\sim Exp(1)$ and $Y\sim Exp(2)$. Then by \eqref{eqki} the inaccuracy measure of $X$ and $Y$ is given by
\begin{equation}
\nonumber
I(f;g)=-\int_0^{+\infty} e^{-x} \log\left(2e^{-2x}\right) dx =2-\log2.
\end{equation}
Hence, the varinaccuracy is obtained by \eqref{eqvarki} as
\begin{eqnarray}
\nonumber
VarI(f;g)&=& \int_0^{+\infty} e^{-x} \log^2\left(2e^{-2x}\right) dx - (2-\log2)^2 \\
\nonumber
&=& \log^2 2 -4\log2+8-4-\log^2 2+4\log2=4.
\end{eqnarray}
In order to generalize the above example, we consider now $X\sim Exp(\lambda)$ and $Y\sim Exp(\eta)$. In Figure \ref{fig-exp}, the inaccuracy and the varinaccuracy of $X$ and $Y$ are plotted as functions of $\eta$, for $\lambda=1,2,3,4$, with solid, dashed, dotted and dash-dot line, respectively. Observe that $I(f;g)$ has minimum at $\eta=\lambda$ and $VarI(f;g)$ is increasing in $\eta$.

\begin{figure}
     \centering
     \begin{subfigure}[b]{0.48\textwidth}
         \centering
         \includegraphics[width=\textwidth]{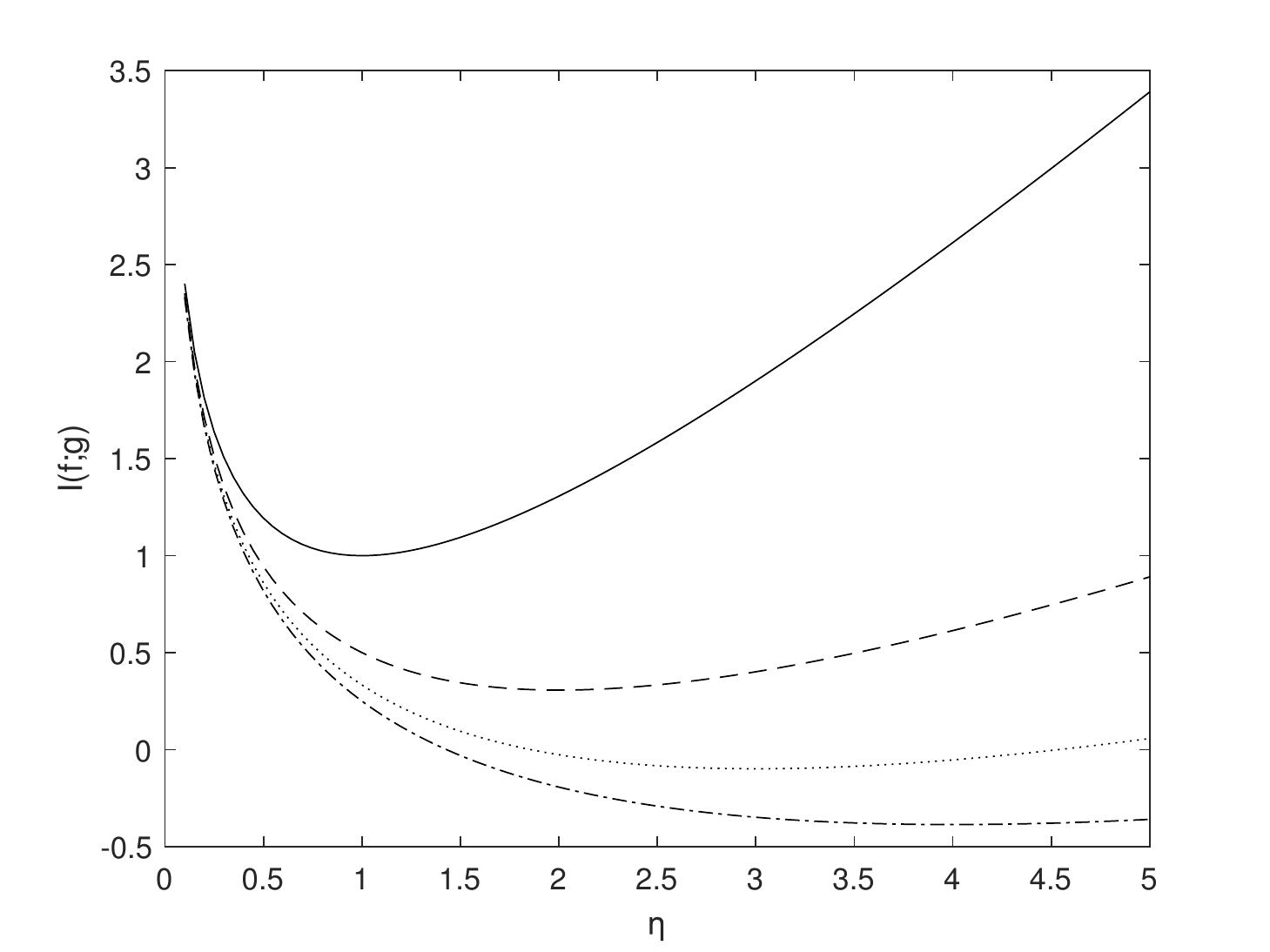}
         \caption{$I(f;g)$}
     \end{subfigure}
     \hfill
     \begin{subfigure}[b]{0.48\textwidth}
         \centering
         \includegraphics[width=\textwidth]{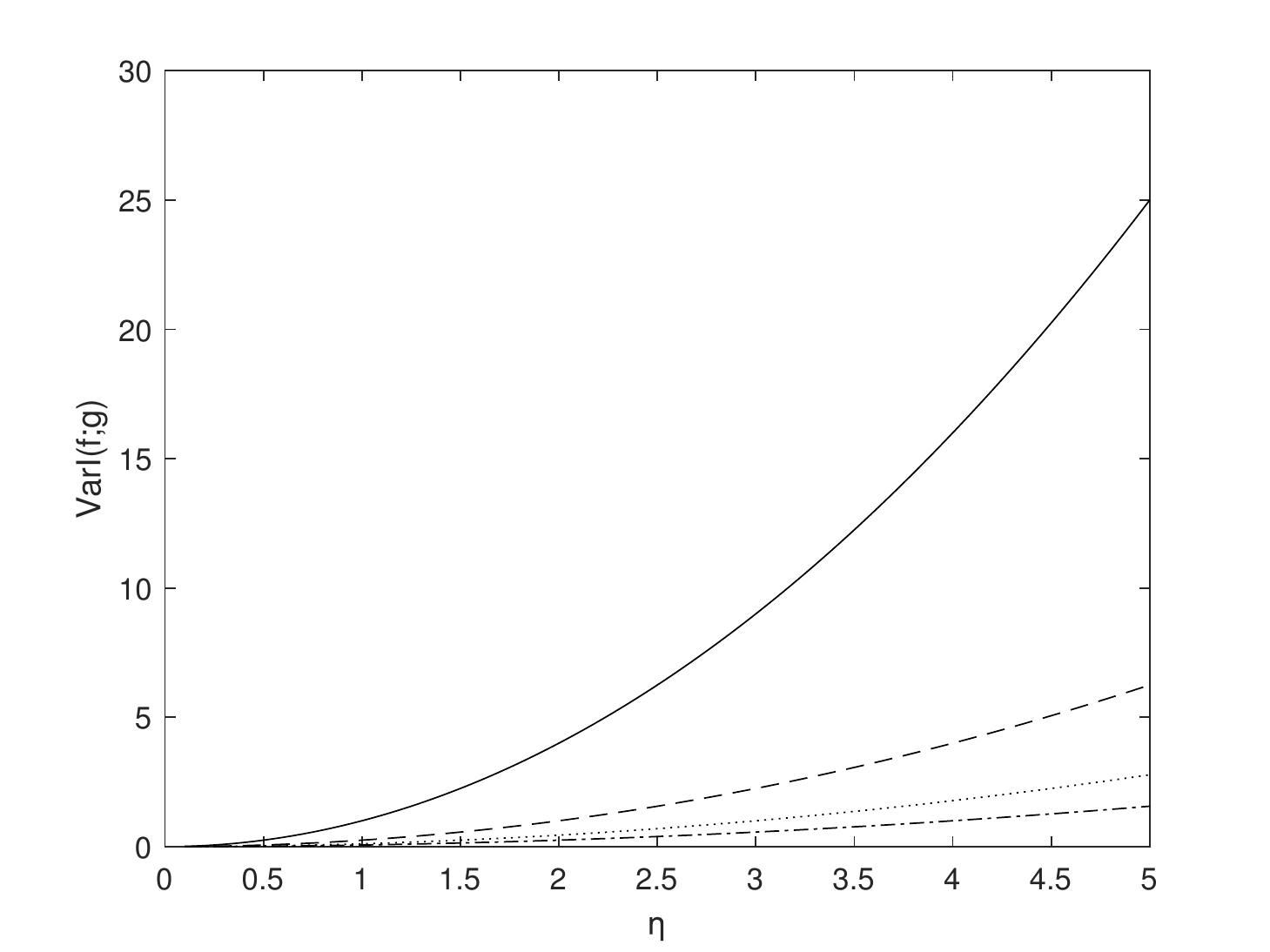}
         \caption{$VarI(f;g)$}
     \end{subfigure}
     \hfill
        \caption{Plot of $I$ and $VarI$ in Example \ref{ex1} as a function of $\eta$ for $\lambda=1,2,3,4$.}
        \label{fig-exp}
\end{figure}
\end{example}

\begin{example}
\label{ex3}
Consider $X\sim U(0,1)$ and $Y$ with probability density function $g$ given by
\begin{equation}
\nonumber
g(y)=2y, \ \ \ y\in(0,1).
\end{equation}
Then, by \eqref{eqki}, the inaccuracy measure of $X$ and $Y$ is given by
\begin{equation}
\nonumber
I(f;g)=-\int_0^{1}  \log\left(2x\right) dx = 1-\log 2.
\end{equation}
Hence, the varinaccuracy is obtained by \eqref{eqvarki} as
\begin{eqnarray}
\nonumber
VarI(f;g)&=& \int_0^{1} \log^2\left(2x\right) dx - \left(1-\log 2\right)^2 \\
\nonumber
&=& \log^2 2-2\log2+2-1-\log^2 2+2\log2 =1.
\end{eqnarray}
More in general, we can say that $Y$ belongs to the family of Power distributions characterized by the pdf $g_{\alpha}(x)=\alpha x^{\alpha-1}$, $x\in(0,1)$. In Figure \ref{fig-power}, we plot the inaccuracy and the varinaccuracy of $X$ and $Y$ as a function of $\alpha$. In this case, the inaccuracy reaches the minimum at $\alpha=1,$ that is when $Y$ has a uniform distribution in $(0,1)$, and $VarI$ is not monotone.

\begin{figure}
     \centering
     \begin{subfigure}[b]{0.48\textwidth}
         \centering
         \includegraphics[width=\textwidth]{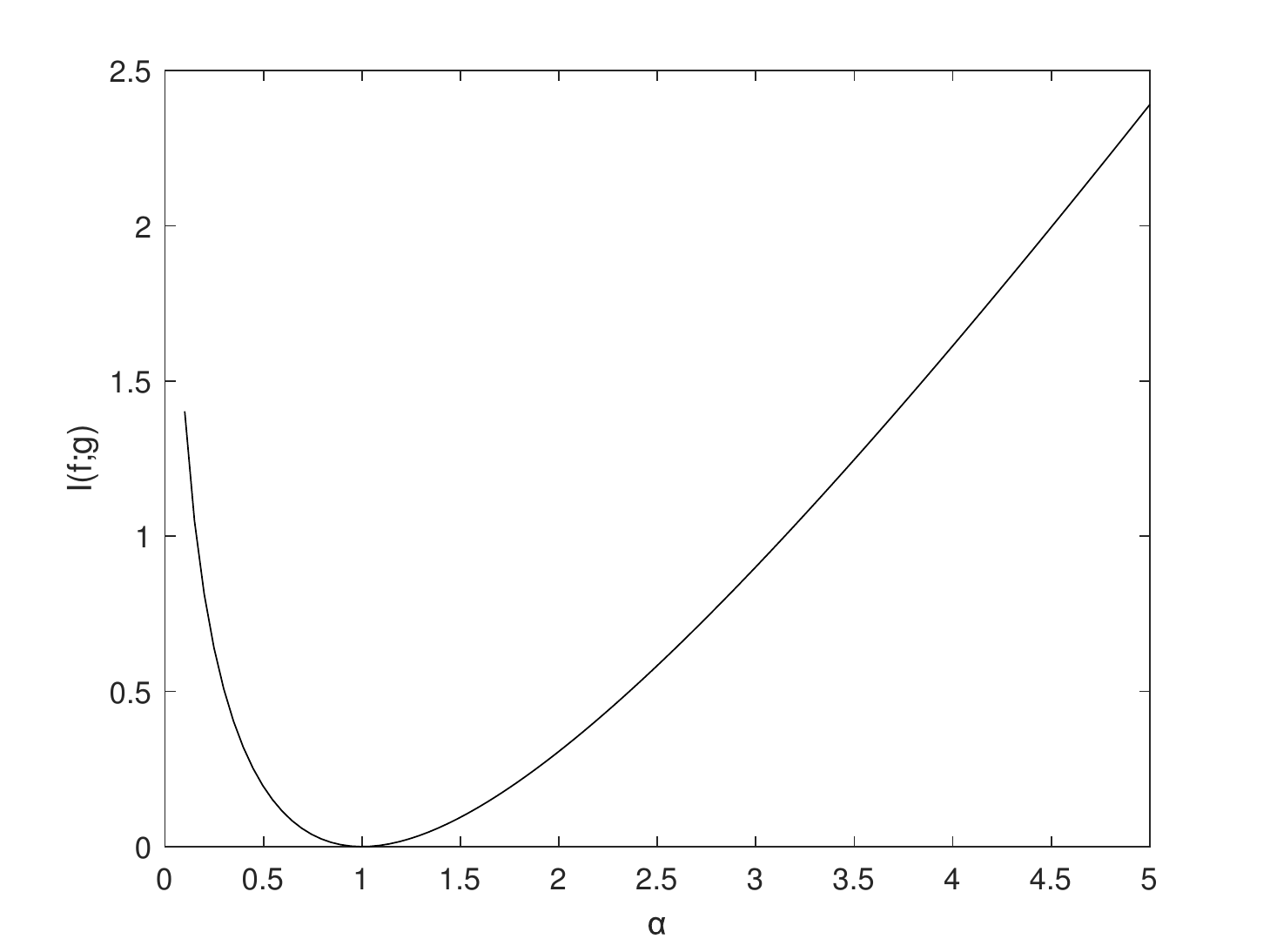}
         \caption{$I(f;g)$}
     \end{subfigure}
     \hfill
     \begin{subfigure}[b]{0.48\textwidth}
         \centering
         \includegraphics[width=\textwidth]{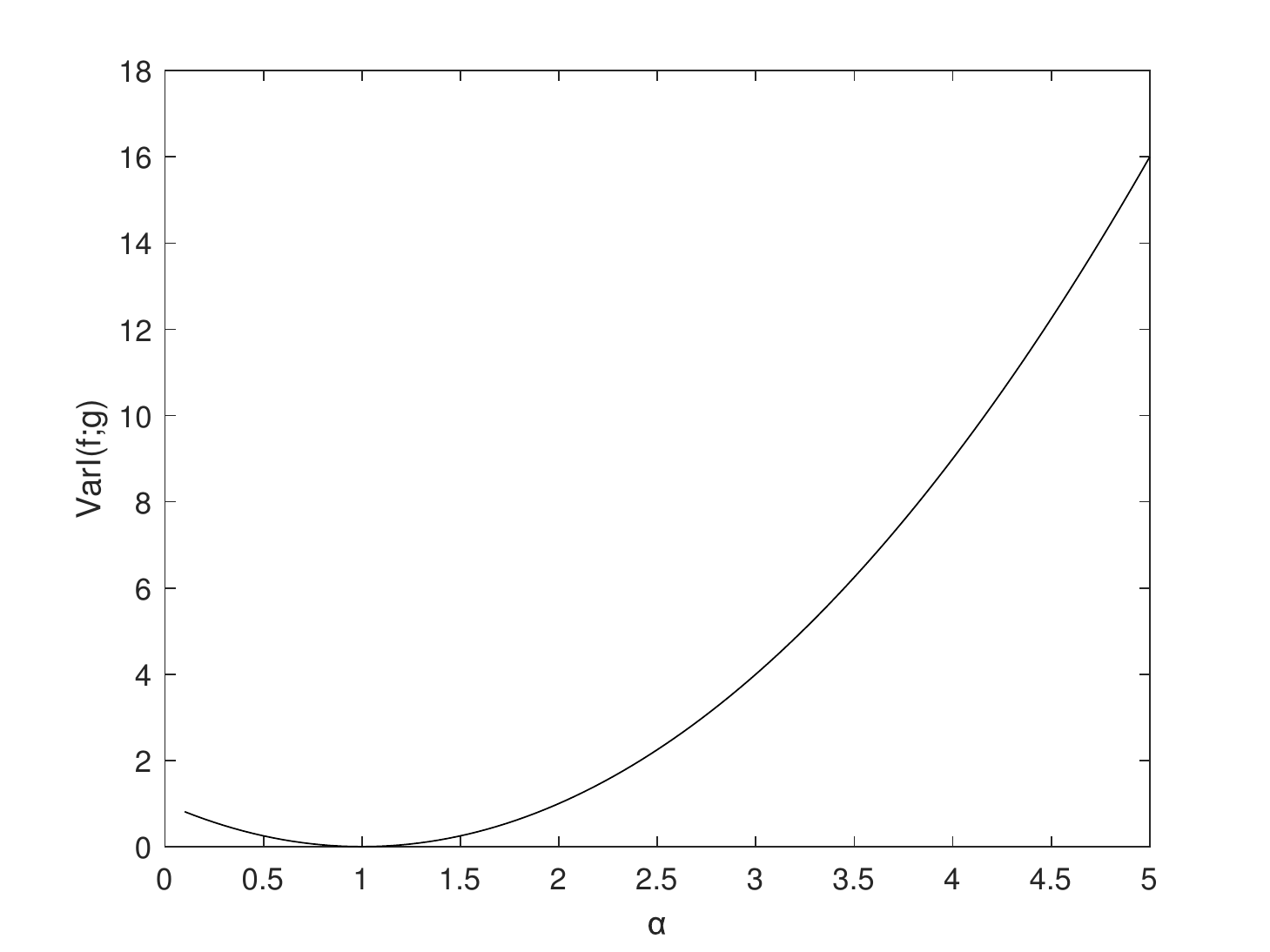}
         \caption{$VarI(f;g)$}
     \end{subfigure}
     \hfill
        \caption{Plot of $I$ and $VarI$ in Example \ref{ex3} as a function of $\alpha$.}
        \label{fig-power}
\end{figure}
\end{example}

In the following proposition, we examine the behaviour of varinaccuracy under affine transformations. The proof of it can be obtained by a simple change of variable technique and it is similar to the property of Shannon entropy along with invariance of the variance under translation.

\begin{proposition}
Let $X$ and $Y$ be two random variables with common support $S$ and pdf's $f$ and $g$, respectively. Let $a>0$, $b\geq0$ and define the variables $\tilde X$, $\tilde Y$ as $\tilde X=aX+b$, $\tilde Y=aY+b$ with pdf's $\tilde f$ and $\tilde g$, respectively. Then, we have
\begin{equation}
\nonumber
VarI(\tilde f;\tilde g)=VarI(f;g).
\end{equation}
\end{proposition}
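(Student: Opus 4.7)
The plan is to reduce the statement to the translation invariance of the variance, exactly as the authors hint. First I would write down the density of an affine transform: since $a>0$, the change-of-variable formula gives
\begin{equation}
\nonumber
\tilde f(u)=\frac{1}{a}\,f\!\left(\frac{u-b}{a}\right),\qquad \tilde g(u)=\frac{1}{a}\,g\!\left(\frac{u-b}{a}\right),
\end{equation}
for $u$ in the image of $S$ under $x\mapsto ax+b$. In particular, evaluating at $u=\tilde X=aX+b$ yields $\tilde g(\tilde X)=\frac{1}{a}\,g(X)$.

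Next I would take logarithms to obtain the pointwise identity $-\log\tilde g(\tilde X)=-\log g(X)+\log a$, which exhibits the random variable $-\log\tilde g(\tilde X)$ as a deterministic translate of $-\log g(X)$ (with the distribution of $X$ equal to the push-forward of $\tilde X$ by the inverse affine map, which is precisely the content of the change-of-variable formula). The constant $\log a$ then drops out of the variance by translation invariance:
\begin{equation}
\nonumber
VarI(\tilde f;\tilde g)=\mathrm{Var}_{\tilde f}\!\left[-\log\tilde g(\tilde X)\right]=\mathrm{Var}_{f}\!\left[-\log g(X)+\log a\right]=\mathrm{Var}_{f}\!\left[-\log g(X)\right]=VarI(f;g).
\end{equation}

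There is no real obstacle here; the only point that requires a line of care is making sure the expectation is taken with respect to the correct density after the change of variable, i.e.\ that the Jacobian factor $1/a$ appearing in $\tilde f$ is exactly what matches the change of measure from $\tilde X$ to $X$. Once this bookkeeping is done, the assumption $b\geq 0$ plays no role beyond keeping the transformed variables in the non-negative setting used throughout the paper, and the assumption $a>0$ is what allows us to avoid an absolute value in the Jacobian and to keep $\log a$ well defined.
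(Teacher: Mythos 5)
Your proof is correct and follows essentially the same route as the paper's: the change-of-variable formula gives $\tilde g(\tilde X)=g(X)/a$, so $-\log\tilde g(\tilde X)$ is a translate of $-\log g(X)$ by the constant $\log a$, which drops out of the variance. The paper carries this out by expanding the integrals explicitly, whereas you invoke translation invariance of the variance directly, but the underlying argument is identical.
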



\begin{proposition}
Let $X$ and $Y$ be two random variables with common support $S$ and pdf's $f$ and $g$, respectively. Let $\phi$ be a strictly monotone function and define the variables $\tilde X$, $\tilde Y$ as $\tilde X=\phi(X)$, $\tilde Y=\phi(Y)$ with pdf's $\tilde f$ and $\tilde g$, respectively. Then, we have
\begin{equation}
\nonumber
VarI(\tilde f;\tilde g)=VarI(f;g)+ Var_f[\log |\phi'(X)|]-2cov_f(\log g(X),\log|\phi'(X)|).
\end{equation}
\end{proposition}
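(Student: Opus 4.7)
The plan is to reduce everything to a calculation of $\mathrm{Var}_f$ applied to a sum of two random variables, after rewriting $\log \tilde g(\tilde X)$ in terms of $\log g(X)$ and $\log|\phi'(X)|$.

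First I would recall the change-of-variable formula for densities: since $\phi$ is strictly monotone, $\tilde Y = \phi(Y)$ has pdf
\begin{equation}
\nonumber
\tilde g(\tilde y) = \frac{g(\phi^{-1}(\tilde y))}{|\phi'(\phi^{-1}(\tilde y))|},
\end{equation}
and similarly for $\tilde f$. Taking logarithms and evaluating at $\tilde X = \phi(X)$ gives
\begin{equation}
\nonumber
-\log \tilde g(\tilde X) = -\log g(X) + \log|\phi'(X)|.
\end{equation}

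Next I would apply $\mathrm{Var}_{\tilde f}$ to the left side, noting that expectations of functions of $\tilde X$ under $\tilde f$ equal expectations of the corresponding functions of $X$ under $f$ (this is the usual law of the unconscious statistician, with the Jacobian factor already absorbed into the identity above). Hence
\begin{equation}
\nonumber
VarI(\tilde f;\tilde g) = \mathrm{Var}_f\bigl[-\log g(X) + \log|\phi'(X)|\bigr].
\end{equation}
Expanding the variance of a sum of two random variables with the standard identity $\mathrm{Var}(U+V) = \mathrm{Var}(U) + \mathrm{Var}(V) + 2\,\mathrm{cov}(U,V)$ and using bilinearity of covariance to pull out the sign in $-\log g(X)$ yields exactly the claimed formula.

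The only subtlety to check is that the absolute value in $|\phi'(X)|$ is handled correctly in both the decreasing and increasing cases; but since strict monotonicity makes $\phi'$ of constant sign (wherever it is defined), the factor $|\phi'(\phi^{-1}(\tilde y))|$ is unambiguous and the derivation above is the same in both cases. No step is really an obstacle — the computation is a direct transfer of the classical invariance calculation for Shannon entropy, adapted to the cross-entropy functional, with the extra covariance term appearing precisely because $\log g(X)$ and $\log|\phi'(X)|$ need not be uncorrelated under $f$.
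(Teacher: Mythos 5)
Your proposal is correct and follows essentially the same route as the paper: both rest on the change-of-variable relation $\tilde g(\phi(x))=g(x)/|\phi'(x)|$, which turns $-\log\tilde g(\tilde X)$ into $-\log g(X)+\log|\phi'(X)|$, and then on expanding the variance of this sum. The only cosmetic differences are that you invoke the identity $\mathrm{Var}(U+V)=\mathrm{Var}(U)+\mathrm{Var}(V)+2\,\mathrm{cov}(U,V)$ directly at the random-variable level and keep the absolute value throughout, whereas the paper reduces without loss of generality to an increasing $\phi$ on $(0,+\infty)$ and carries out the same expansion on the explicit integrals.
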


\begin{proof}
Without loss of generality, consider the case $S=(0,+\infty)$ with $\phi$ strictly increasing from $\phi(0)$ to $+\infty$. Then the common support of $\tilde X$ and $\tilde Y$ is $(\phi(0),+\infty)$. The relation between the pdf's of $\tilde X$, $\tilde Y$ and $X$, $Y$ is given by
\begin{equation}
\nonumber
\tilde f(x)= \frac{f(\phi^{-1}(x))}{\phi'(\phi^{-1}(x))}, \ \ \  \tilde g(x)=\frac{g(\phi^{-1}(x))}{\phi'(\phi^{-1}(x))}, \ \ \ x\in(\phi(0),+\infty),
\end{equation}
where $\frac{1}{\phi'(\phi^{-1}(x))}=\left.\frac{d}{dy} \phi^{-1}(y)\right |_{y=\phi^{-1}(x)}$.
Then, the inaccuracy of $\tilde X$ and $\tilde Y$ can be written as
\begin{eqnarray}
\nonumber
I(\tilde f;\tilde g)&=& - \int_{\phi(0)}^{+\infty} \frac{f(\phi^{-1}(x))}{\phi'(\phi^{-1}(x))} \log \left[\frac{g(\phi^{-1}(x))}{\phi'(\phi^{-1}(x))}\right] dx \\
\nonumber
&=&I(f;g)+\mathbb E_f[\log \phi'(X)].
\end{eqnarray}
Hence, the varinaccuracy of $\tilde X$ and $\tilde Y$ can be obtained as
\begin{eqnarray}
\nonumber
VarI(\tilde f;\tilde g)&=& \int_b^{+\infty} \frac{f(\phi^{-1}(x))}{\phi'(\phi^{-1}(x))} \log^2 \left[\frac{g(\phi^{-1}(x))}{\phi'(\phi^{-1}(x))}\right] dx -[I(f;g)+\mathbb E_f[\log \phi'(X)]]^2\\
\nonumber
&=& \int_0^{+\infty} f(x)\log^2 g(x) dx +\int_0^{+\infty}f(x)\log^2(\phi'(x)) dx \\
\nonumber
&&-2\int_0^{+\infty} f(x) \log (\phi'(x))\log (g(x)) dx -[I(f;g)+\mathbb E_f[\log \phi'(X)]]^2\\
\nonumber
&=& VarI(f;g)+ Var_f[\log \phi'(X)]-2cov_f(\log g(X),\log\phi'(X)),
\end{eqnarray}
which completes the proof.
\end{proof}

\begin{proposition}
Let $X$ and $Y$ be two random variables with common support $S$ and pdf's $f$ and $g$, respectively. Then, $VarI(f;g)=0$ if and only if $Y$ is uniformly distributed in $S$.
\end{proposition}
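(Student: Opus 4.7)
The plan is to exploit the fact that $VarI(f;g)$ is literally a variance in the probabilistic sense, namely $Var_f[-\log g(X)]$, so the characterization reduces to the standard fact that a variance vanishes if and only if the underlying random variable is almost surely constant.

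First, for the ``if'' direction, I would suppose $Y$ is uniform on $S$, so that $g(x)= 1/|S|$ for every $x\in S$ (where $|S|$ denotes the Lebesgue measure of the common support). Then the random variable $-\log g(X)$ equals the constant $\log |S|$ for every $x\in S$, and since $f$ is supported on $S$ this holds $f$-almost surely. A constant random variable has zero variance, so $VarI(f;g)=Var_f[-\log g(X)]=0$.

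For the ``only if'' direction, I would start from $VarI(f;g)=0$ and apply the elementary characterization of variance: $Var_f[-\log g(X)]=0$ forces $-\log g(X)=c$ to hold $f$-almost surely for some constant $c\in\mathbb R$, hence $g(x)=e^{-c}$ for $f$-a.e.\ $x\in S$. Because $X$ and $Y$ have common support $S$, the density $f$ is (essentially) positive throughout $S$, so the equality $g(x)=e^{-c}$ propagates to almost every $x\in S$. Finally, since $g$ is a probability density on $S$, integrating $g\equiv e^{-c}$ over $S$ gives $e^{-c}|S|=1$, which forces $|S|<\infty$ and $g(x)=1/|S|$ on $S$; that is exactly the uniform distribution on $S$.

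The main subtlety, rather than a serious obstacle, is the ``common support'' hypothesis: one has to note that the almost-sure statement with respect to $f$ translates into a statement about $g$ on all of $S$ precisely because $f>0$ on $S$, and that the resulting constant value of $g$ must equal $1/|S|$ by normalization, which tacitly requires $|S|<\infty$ (so that the uniform distribution on $S$ is well-defined). No further computation is needed.
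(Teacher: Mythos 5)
Your proof is correct and follows essentially the same route as the paper: both rest on the fact that a variance vanishes precisely when $-\log g(X)$ is $f$-almost surely constant, which via the common-support assumption forces $g$ to be constant on $S$ and hence uniform by normalization. Your write-up is simply a more careful version of the paper's terse argument, making explicit the almost-sure statement, the role of $f>0$ on $S$, and the implicit requirement that $S$ have finite measure.
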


\begin{proof}
The varinaccuracy is defined as a variance, it vanishes only for degenerate distributions. In particular, $\log g(x)$ needs to be constant for $x\in S$, i.e., $g$ needs to be a constant function and then $Y$ has to be uniformly distributed in $S$.
\end{proof}

In the following proposition, we obtain a lower bound for the varinaccuracy based on Chebyshev inequality, which for a random variable $W$ with mean $\mathbb E(W)$ and variance $Var(W)$ is given by
\begin{equation}
\label{ceb}
\mathbb P\left(|W-\mathbb E(W)|<\varepsilon\right)\geq 1- \frac{Var(W)}{\varepsilon^2},  \ \ \ \varepsilon>0.
\end{equation}

\begin{proposition}
\label{propceb}
Let $X$ and $Y$ be two random variables with common support $S$ and pdf's $f$ and $g$, respectively and let $\varepsilon>0$. Then a lower bound for the varinaccuracy is given by
\begin{equation}
VarI(f;g)\geq \varepsilon^2\left[ \mathbb P\left(g(X)\leq e^{-\varepsilon-I(f;g)}\right)+\mathbb P\left(g(X)\geq e^{\varepsilon-I(f;g)}\right)\right].
\end{equation}
\end{proposition}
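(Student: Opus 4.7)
The plan is to apply Chebyshev's inequality \eqref{ceb} directly to the random variable $W := -\log g(X)$, under the probability measure induced by $f$. By definition \eqref{eqki}, $\mathbb{E}_f(W) = I(f;g)$, and by \eqref{eqvarki}, $\mathrm{Var}_f(W) = VarI(f;g)$. So Chebyshev is tailor-made for this situation.

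The first step is to rewrite \eqref{ceb} in its complementary form, namely
\begin{equation}
\nonumber
\mathrm{Var}_f(W) \geq \varepsilon^2\, \mathbb{P}\!\left(|W - \mathbb{E}_f(W)| \geq \varepsilon\right),
\end{equation}
and then split the event $\{|W - I(f;g)| \geq \varepsilon\}$ into the two disjoint tail events $\{W \geq I(f;g)+\varepsilon\}$ and $\{W \leq I(f;g)-\varepsilon\}$, so that their probabilities add.

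The second step is to translate these events back to statements about $g(X)$. Since $W = -\log g(X)$ and the logarithm is strictly increasing, $W \geq I(f;g)+\varepsilon$ is equivalent to $g(X) \leq e^{-\varepsilon - I(f;g)}$, while $W \leq I(f;g)-\varepsilon$ is equivalent to $g(X) \geq e^{\varepsilon - I(f;g)}$. Substituting these into the Chebyshev bound yields exactly
\begin{equation}
\nonumber
VarI(f;g) \geq \varepsilon^2\left[\mathbb{P}\!\left(g(X)\leq e^{-\varepsilon-I(f;g)}\right) + \mathbb{P}\!\left(g(X)\geq e^{\varepsilon-I(f;g)}\right)\right].
\end{equation}

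There is no real obstacle here; the argument is a one-line application of Chebyshev to $-\log g(X)$, followed by an exponentiation to convert the tail events on $-\log g(X)$ into tail events on $g(X)$. The only care needed is to ensure the two tail events are disjoint (which they are, provided $\varepsilon > 0$) so that their probabilities can simply be summed rather than combined by inclusion--exclusion.
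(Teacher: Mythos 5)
Your proposal is correct and follows essentially the same route as the paper: the paper likewise applies Chebyshev's inequality to $-\log g(X)$ under $f$, passes to the complementary form, splits the event $\{|\log g(X)+I(f;g)|\geq\varepsilon\}$ into the two tails, and exponentiates to obtain the stated bound. No gaps to report.
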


\begin{proof}
Based on the definitions of inaccuracy and varinaccuracy \eqref{eqki}, \eqref{eqvarki}, the Chebyshev inequality \eqref{ceb} yields
\begin{equation}
\label{eqpro1}
VarI(f;g)\geq\varepsilon^2\mathbb P(|\log g(X)+I(f;g)|\geq \varepsilon).
\end{equation}
The second factor in the right hand side of the above equation can be written as
\begin{eqnarray}
\nonumber
\mathbb P(|\log g(X)+I(f;g)|\geq \varepsilon)&=&\mathbb P\left(\log g(X)+I(f;g)\leq -\varepsilon\right)+\mathbb P\left(\log g(X)+I(f;g)\geq \varepsilon\right) \\
\label{eqpro2}
&=& \mathbb P\left(g(X)\leq e^{-\varepsilon-I(f;g)}\right)+\mathbb P\left(g(X)\geq e^{\varepsilon-I(f;g)}\right),
\end{eqnarray}
and the proof is completed by combining \eqref{eqpro1} and \eqref{eqpro2}.
\end{proof}

In the following corollaries, we specialize the result of Proposition \ref{propceb} when $g$ is strictly increasing or decreasing.

\begin{corollary}
\label{cor1}
Let $X$ and $Y$ be two random variables with common support $S$, pdf's $f$ and $g$ and cdf's $F$ and $G$, respectively and let $\varepsilon>0$. If $g$ is strictly decreasing in $S$, then
\begin{equation}
VarI(f;g)\geq \varepsilon^2\left[ \overline F\left(g^{-1}(e^{-\varepsilon-I(f;g)})\right)+ F\left(g^{-1}(e^{\varepsilon-I(f;g)})\right)\right],
\end{equation}
where $\overline F(\cdot)=1-F(\cdot)$ is the survival function of $X$.
\end{corollary}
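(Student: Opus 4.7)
The plan is to apply Proposition \ref{propceb} directly and then translate the events $\{g(X)\leq c\}$ and $\{g(X)\geq c\}$ into events about $X$ by inverting $g$. Since $g$ is strictly decreasing on $S$, the inverse $g^{-1}$ exists and is itself strictly decreasing, which flips the inequalities and makes the conversion to $F$ and $\overline F$ straightforward.

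Concretely, I would start from the bound
\begin{equation}
\nonumber
VarI(f;g)\geq \varepsilon^2\left[\mathbb P\left(g(X)\leq e^{-\varepsilon-I(f;g)}\right)+\mathbb P\left(g(X)\geq e^{\varepsilon-I(f;g)}\right)\right],
\end{equation}
which is Proposition \ref{propceb}. Then, using that $g$ is strictly decreasing, for any constant $c>0$ in the range of $g$, the event $\{g(X)\leq c\}$ coincides with $\{X\geq g^{-1}(c)\}$, and the event $\{g(X)\geq c\}$ coincides with $\{X\leq g^{-1}(c)\}$. Applying this with $c=e^{-\varepsilon-I(f;g)}$ for the first probability and $c=e^{\varepsilon-I(f;g)}$ for the second, they become $\overline F(g^{-1}(e^{-\varepsilon-I(f;g)}))$ and $F(g^{-1}(e^{\varepsilon-I(f;g)}))$, respectively, yielding the stated bound.

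There is essentially no obstacle here: the only mild subtlety is confirming that the constants $e^{\pm\varepsilon-I(f;g)}$ belong to the range of $g$ so that $g^{-1}$ is defined there. If not, the corresponding probability is either $0$ or $1$, and the conventions $g^{-1}(c)=\inf S$ or $\sup S$ still make the identity hold, so the inequality remains valid in the extended sense. I would mention this briefly and otherwise let the monotonicity argument speak for itself; the proof is only a couple of lines.
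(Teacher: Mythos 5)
Your proposal is correct and follows exactly the route the paper intends: the corollary is obtained from Proposition \ref{propceb} by rewriting the events $\{g(X)\leq e^{-\varepsilon-I(f;g)}\}$ and $\{g(X)\geq e^{\varepsilon-I(f;g)}\}$ as $\{X\geq g^{-1}(\cdot)\}$ and $\{X\leq g^{-1}(\cdot)\}$ using the strict monotonicity of $g$, which is precisely the specialization the authors state (and your remark on constants outside the range of $g$ matches how the paper handles that case in Example \ref{exbound1}).
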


\begin{corollary}
\label{cor2}
Let $X$ and $Y$ be two random variables with common bounded support $S$, pdf's $f$ and $g$ and cdf's $F$ and $G$, respectively, and let $\varepsilon>0$. If $g$ is strictly increasing in $S$, then
\begin{equation}
VarI(f;g)\geq  \varepsilon^2\left[F\left(g^{-1}(e^{-\varepsilon-I(f;g)})\right)+ \overline F\left(g^{-1}(e^{\varepsilon-I(f;g)})\right)\right].
\end{equation}
\end{corollary}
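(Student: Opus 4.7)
The plan is to derive this corollary directly from Proposition \ref{propceb} by rewriting the two probabilities in terms of $F$ and $\overline F$ under the monotonicity assumption on $g$.

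First, I would invoke Proposition \ref{propceb}, which already gives the general lower bound
\begin{equation}
\nonumber
VarI(f;g)\geq \varepsilon^2\left[ \mathbb P\left(g(X)\leq e^{-\varepsilon-I(f;g)}\right)+\mathbb P\left(g(X)\geq e^{\varepsilon-I(f;g)}\right)\right].
\end{equation}
The only work left is to translate the events $\{g(X)\leq c\}$ and $\{g(X)\geq c\}$ into events on $X$ alone. Since $g$ is strictly increasing on $S$, its inverse $g^{-1}$ is well defined on $g(S)$ and is also strictly increasing, so $g(X)\leq c \iff X\leq g^{-1}(c)$ and $g(X)\geq c \iff X\geq g^{-1}(c)$, provided $c$ lies in the range of $g$. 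Applying this with $c=e^{-\varepsilon-I(f;g)}$ and $c=e^{\varepsilon-I(f;g)}$ gives
\begin{equation}
\nonumber
\mathbb P(g(X)\leq e^{-\varepsilon-I(f;g)})= F\bigl(g^{-1}(e^{-\varepsilon-I(f;g)})\bigr),\qquad \mathbb P(g(X)\geq e^{\varepsilon-I(f;g)})=\overline F\bigl(g^{-1}(e^{\varepsilon-I(f;g)})\bigr),
\end{equation}
and substituting these into the Chebyshev-type bound yields the claimed inequality.

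The only delicate point, and likely the sole obstacle, is handling the possibility that $e^{-\varepsilon-I(f;g)}$ falls below $\inf_{S}g$ or that $e^{\varepsilon-I(f;g)}$ exceeds $\sup_{S}g$; this is precisely why the corollary assumes $S$ is bounded, which guarantees that $g(S)$ is a bounded interval and makes the framing of $g^{-1}$ natural. If a threshold lies outside $g(S)$, the corresponding probability is zero and the inequality degenerates (with the convention that $g^{-1}$ is extended to the nearest endpoint of $S$, so $F$ or $\overline F$ returns $0$); in that case the bound still holds but is weaker. All remaining manipulations are routine applications of monotonicity and definitions, so I would write the proof as a single short paragraph invoking Proposition \ref{propceb} and the above substitution.
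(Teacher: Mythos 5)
Your proposal is correct and follows exactly the route the paper intends: Corollary \ref{cor2} is obtained by specializing Proposition \ref{propceb}, using the strict monotonicity of $g$ to rewrite $\mathbb P\bigl(g(X)\leq e^{-\varepsilon-I(f;g)}\bigr)$ and $\mathbb P\bigl(g(X)\geq e^{\varepsilon-I(f;g)}\bigr)$ as $F\bigl(g^{-1}(e^{-\varepsilon-I(f;g)})\bigr)$ and $\overline F\bigl(g^{-1}(e^{\varepsilon-I(f;g)})\bigr)$. Your remark on thresholds falling outside $g(S)$ is also consistent with how the paper handles that situation in its examples (setting the corresponding probability to zero), so nothing is missing.
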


\begin{example}
\label{exbound1}
Consider $X\sim Exp(\lambda)$ and $Y\sim Exp(\eta)$. In Example \ref{ex1}, we have plotted the varinaccuracy of $X$ and $Y$. Here, we use Corollary \ref{cor1} to evaluate the lower bound. In fact, in this case the pdf $g$ of $Y$ is strictly decreasing and we have
\begin{equation}
\nonumber
g^{-1}(z)=-\frac{1}{\eta} \log \frac{z}{\eta}, \ \ \ z\in(0,\eta).
\end{equation}
Moreover, the inaccuracy of $X$ and $Y$ is given by
\begin{equation}
\nonumber
I(f;g)=-\log\eta +\frac{\eta}{\lambda}.
\end{equation}
If $\varepsilon\lambda>\eta$, we have
\begin{equation}
\nonumber
e^{\varepsilon-I(f;g)}>\eta,
\end{equation}
and then $\mathbb P(g(X)\geq e^{\varepsilon-I(f;g)})=0$. Thus, we can conclude
\begin{equation}
VarI(f;g)\geq
\begin{cases}
\varepsilon^2\left(e^{-1-\varepsilon\lambda/\eta}+1-e^{-1+\varepsilon\lambda/\eta}\right), &\mbox{ if } \varepsilon\lambda\leq\eta\\
\varepsilon^2e^{-1-\varepsilon\lambda/\eta}, & \mbox{ if } \varepsilon\lambda>\eta.
\end{cases}
\end{equation}
In Figure \ref{fig-expbound} we plot the varinaccuracy and the bound in the case $\lambda=4$ as a function of $\eta$ and with different choices of $\varepsilon$.
\begin{figure}[ht]
\centering
\includegraphics[scale=0.5]{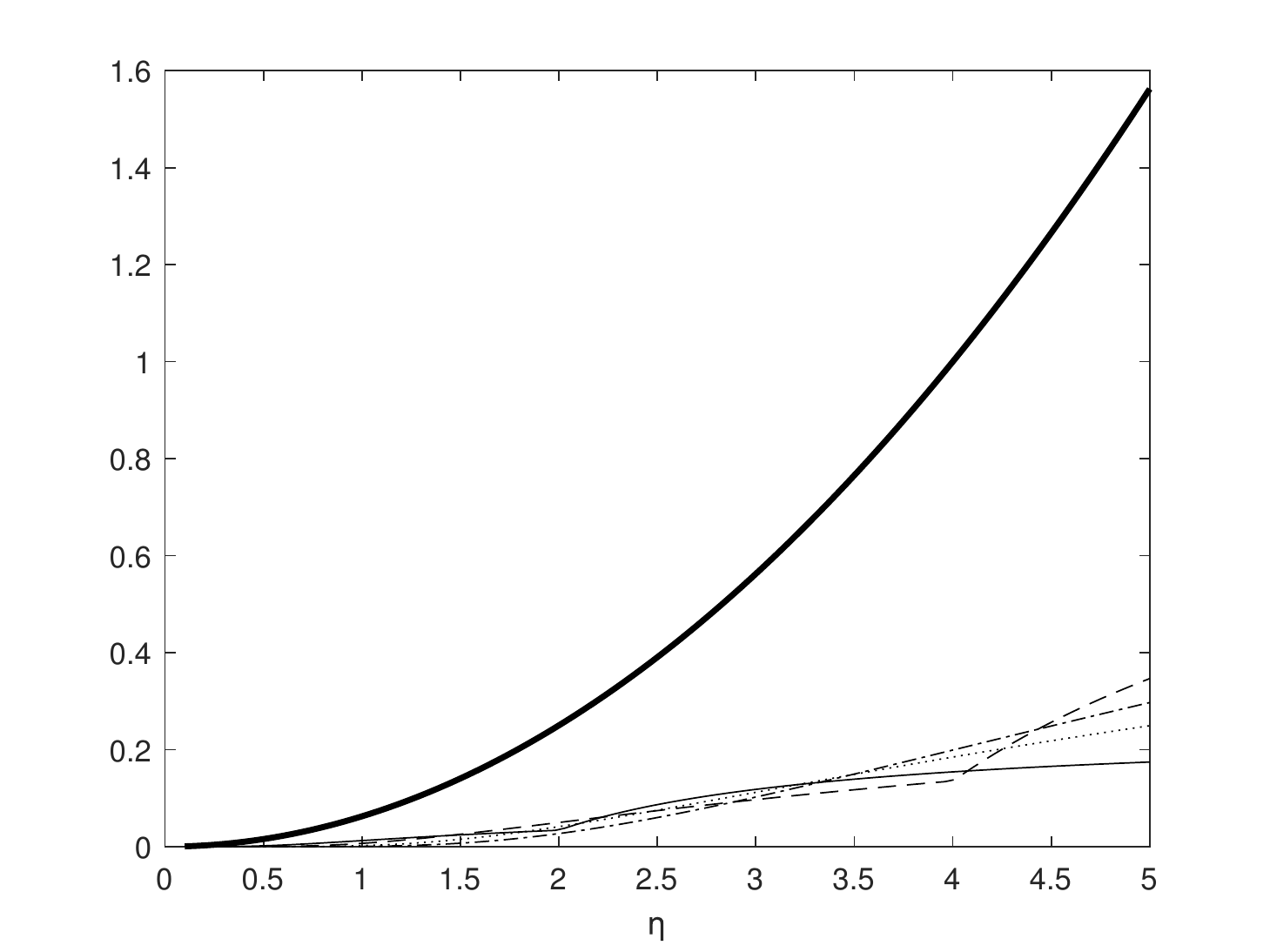}
\caption{Plot of $VarI(f;g)$ (thick line) and lower bounds in Example \ref{exbound1} as a function of $\eta$ for $\lambda=4$ and $\varepsilon=0.5,1,1.5,2$ (solid, dashed, dotted and dash-dot line, respectively).}
\label{fig-expbound}
\end{figure}
\end{example}

\begin{example}
\label{exbound2}
Consider $X\sim U(0,1)$ and $Y\sim Power(\alpha)$ with $\alpha>1$. In Example \ref{ex3}, we have plotted the varinaccuracy of $X$ and $Y$. Here, Corollary \ref{cor2} is used to evaluate the lower bound. In fact, in this case the pdf $g$ of $Y$ is strictly increasing, and we have
\begin{equation}
\nonumber
g^{-1}(z)=\left(\frac{z}{\alpha}\right)^{\frac{1}{\alpha-1}}, \ \ \ z\in(0,\alpha).
\end{equation}
Moreover, the inaccuracy of $X$ and $Y$ is given by
\begin{equation}
\nonumber
I(f;g)=-\log\alpha+(\alpha-1).
\end{equation}
If $1<\alpha<1+\varepsilon$, we have
\begin{equation}
\nonumber
e^{\varepsilon-I(f;g)}>\alpha,
\end{equation}
and then $\mathbb P(g(X)\geq e^{\varepsilon-I(f;g)})=0$. Thus, we can conclude
\begin{equation}
VarI(f;g)\geq
\begin{cases}
\varepsilon^2\left(e^{\frac{1-\varepsilon-\alpha}{\alpha-1}}+1-e^{\frac{1+\varepsilon-\alpha}{\alpha-1}}\right), &\mbox{ if } \alpha\geq1+\varepsilon\\
\varepsilon^2e^{\frac{1-\varepsilon-\alpha}{\alpha-1}}, & \mbox{ if } 1<\alpha<1+\varepsilon.
\end{cases}
\end{equation}
In Figure \ref{fig-powerbound} we plot the varinaccuracy and the bound as a function of $\alpha$ with different choices of $\varepsilon$.
\begin{figure}[ht]
\centering
\includegraphics[scale=0.5]{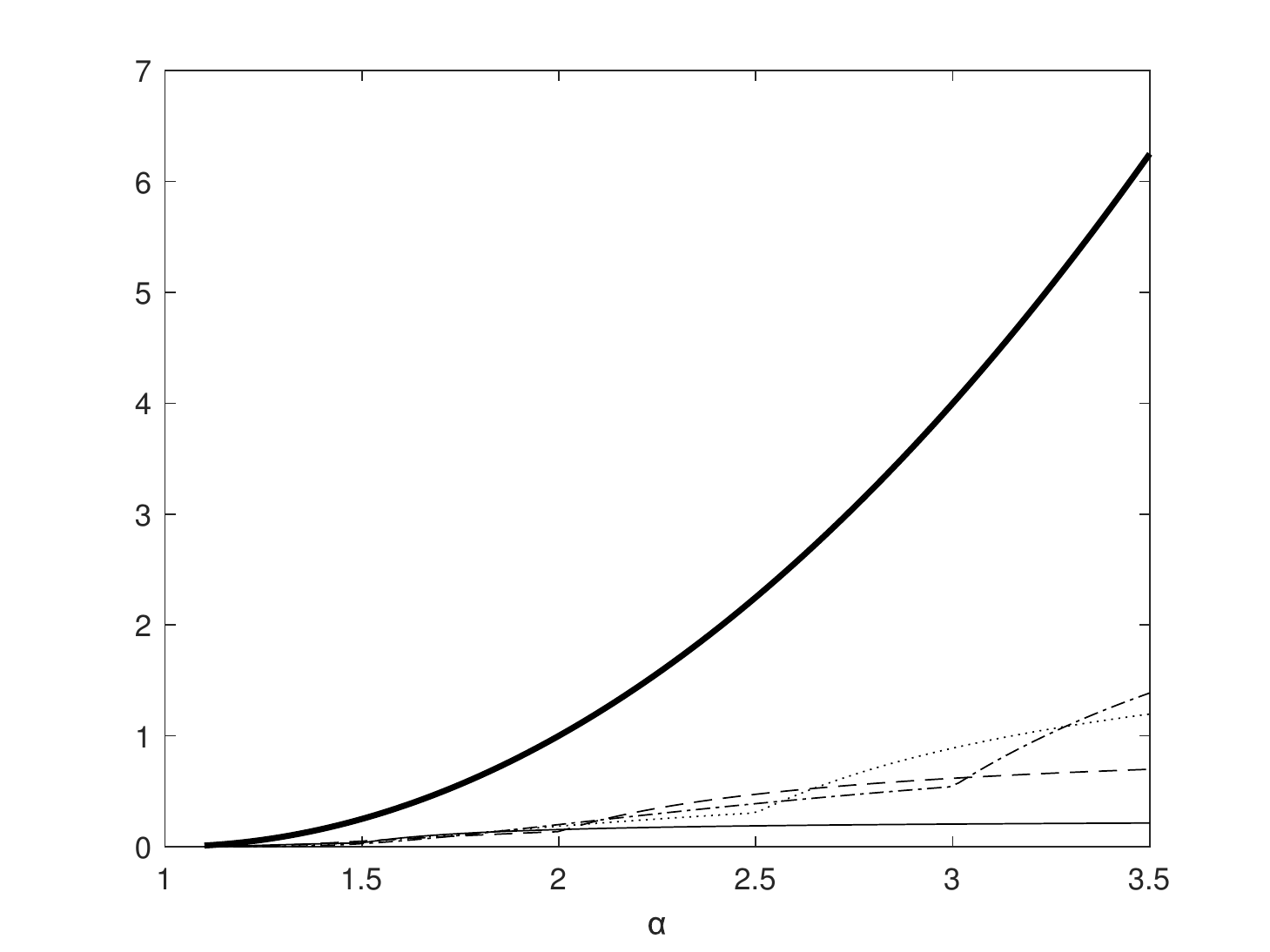}
\caption{Plot of $VarI(f;g)$ (thick line) and lower bounds in Example \ref{exbound2} as a function of $\alpha$ for $\varepsilon=0.5,1,1.5,2$ (solid, dashed, dotted and dash-dot line, respectively).}
\label{fig-powerbound}
\end{figure}
\end{example}

\section{A dispersion index of Kullback-Leibler divergence}
In the following definition, we introduce a dispersion index of Kullback-Leibler divergence based on \eqref{eqkl}.
\begin{definition}
\label{def2}
Let $X$ and $Y$ be two non-negative random variables with pdf's $f$ and $g$, respectively. A dispersion index of Kullback-Leibler divergence of $X$ and $Y$ can be defined as
\begin{eqnarray}
\nonumber
VarK(f:g)&\coloneqq& Var_f \left[\log \frac{f(X)}{g(X)}\right] \\
\nonumber
&=& \mathbb E_f\left[\log^2 \frac{f(X)}{g(X)}\right] - \left[K(f:g)\right]^2\\
\label{varkl}
&=&\int_0^{+\infty} f(x) \log^2 \frac{f(x)}{g(x)} dx-\left[K(f:g)\right]^2 
\end{eqnarray}
\end{definition}
We point out that $VarK(f:g)$ does not represent the variance of Kullback-Leibler divergence but it is only a short notation.

\begin{remark}
As said for the varinaccuracy, also the definition of $VarK$ can be given for variables with a common support $S$ not necessarily equal to $(0,+\infty)$.
\end{remark}

In the following proposition, in analogy with the relation \eqref{rel} we study a connection between varentropy, varinaccuracy and $VarK$.

\begin{proposition}
Let $X$ and $Y$ be two non negative random variables with common support $S$ and pdf's $f$ and $g$, respectively. Then
\begin{equation}
\label{rel2}
VarK(f:g)=VarH(X)+VarI(f;g)-2cov_f(\log f(X),\log g(X)).
\end{equation}
\end{proposition}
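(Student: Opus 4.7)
The plan is to reduce the identity to the bilinear expansion of the variance of a difference. Starting from the definition \eqref{varkl}, I would first use the elementary logarithm identity
\[
\log \frac{f(X)}{g(X)} = \log f(X) - \log g(X),
\]
so that
\[
VarK(f:g) = Var_f\bigl[\log f(X) - \log g(X)\bigr].
\]
Then I would invoke the standard formula $Var(A-B) = Var(A) + Var(B) - 2\,\mathrm{cov}(A,B)$ with $A = \log f(X)$ and $B = \log g(X)$, which gives
\[
VarK(f:g) = Var_f[\log f(X)] + Var_f[\log g(X)] - 2\,\mathrm{cov}_f\bigl(\log f(X), \log g(X)\bigr).
\]

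Next, I would identify each of the first two variances with the quantities appearing in the statement. Since variance is invariant under sign change, $Var_f[\log f(X)] = Var_f[-\log f(X)] = VarH(X)$ by the definition \eqref{varentropy} of varentropy, and similarly $Var_f[\log g(X)] = Var_f[-\log g(X)] = VarI(f;g)$ by Definition \ref{def1}. Substituting these identifications into the previous display yields the claimed relation \eqref{rel2}.

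There is essentially no obstacle here: the argument is purely algebraic manipulation of variance and covariance, and no integrability issue arises beyond what is already implicitly assumed to make $VarH(X)$, $VarI(f;g)$, and $VarK(f:g)$ finite (since the covariance term is then automatically finite by Cauchy--Schwarz). The only mild care needed is tracking the minus signs so that the sign change in going from $\log f(X)$ to $-\log f(X)$ (and from $\log g(X)$ to $-\log g(X)$) cancels consistently, which it does because variance is even in its argument while the covariance picks up a product of two sign flips and is therefore unchanged.
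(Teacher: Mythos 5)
Your proof is correct and follows essentially the same route as the paper's: split $\log\frac{f(X)}{g(X)}$ as a difference, apply the variance-of-a-difference formula, and identify $Var_f[-\log f(X)]$ and $Var_f[-\log g(X)]$ with $VarH(X)$ and $VarI(f;g)$. Your added remarks on sign invariance and finiteness via Cauchy--Schwarz are fine but not needed beyond what the paper assumes.
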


\begin{proof}
By \eqref{varkl} and by taking into account the expression of the variance of the sum, we obtain
\begin{eqnarray}
\nonumber
VarK(f:g)&=&Var_f \left[\log \frac{f(X)}{g(X)}\right]=Var_f\left[\log f(X)-\log g(X) \right] \\
\nonumber
&=& Var_f[\log f(X)]+ Var_f[\log g(X)]-2cov_f(\log f(X),\log g(X)) \\
\nonumber
&=& Var_f[-\log f(X)]+ Var_f[-\log g(X)]-2cov_f(\log f(X),\log g(X))
\end{eqnarray}
and, by recalling \eqref{varentropy} and \eqref{eqvarki}, we get the thesis.
\end{proof}

\begin{proposition}
\label{propdiv}
Let $X$ and $Y$ be two random variables with common support $S$ and pdf's $f$ and $g$, respectively. Then, $VarK(f:g)=0$ if and only if $X$ and $Y$ are identically distributed.
\end{proposition}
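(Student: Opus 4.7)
The plan is to exploit the fact that $VarK(f:g)$ is defined as a genuine variance, namely $\mathrm{Var}_f[\log(f(X)/g(X))]$, and then use the normalization condition that $f$ and $g$ are both densities on $S$.

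First I would handle the easy direction: if $X$ and $Y$ are identically distributed, then $f=g$ almost everywhere on $S$, so $\log(f(X)/g(X))=0$ almost surely under $f$, giving $VarK(f:g)=0$ immediately from Definition \ref{def2}.

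For the nontrivial direction, assume $VarK(f:g)=0$. Since the variance of a random variable vanishes only when that variable is almost surely constant, there exists a constant $c\in\mathbb{R}$ such that
\begin{equation*}
\log \frac{f(X)}{g(X)} = c \qquad \text{$f$-a.s.,}
\end{equation*}
equivalently $f(x)=e^{c}\,g(x)$ for almost every $x$ in the set $\{f>0\}\subseteq S$. Integrating this identity over $S$ would force $e^c = 1$, and hence $f=g$ almost everywhere on $S$.

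The only subtlety — and the step that needs to be handled carefully rather than being the true obstacle — is justifying that the integration argument is valid on the whole common support $S$ and not just on $\{f>0\}$. Because $X$ and $Y$ share the support $S$, we have $\{g>0\}=\{f>0\}=S$ up to a null set, so the relation $f=e^c g$ propagates to all of $S$, the two sides integrate to $1$, and we conclude $c=0$ and $f\equiv g$. This gives that $X$ and $Y$ are identically distributed and completes the equivalence.
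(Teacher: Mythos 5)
Your argument is correct and follows essentially the same route as the paper: vanishing variance forces $\log(f(x)/g(x))$ to be constant on $S$, and the normalization of the densities then forces that constant ratio to equal $1$. You are somewhat more careful than the paper in making the trivial direction explicit and in noting that the common-support assumption is what lets the identity $f=e^{c}g$ hold on all of $S$ before integrating, but this is the same proof in substance.
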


\begin{proof}
$VarK$ is defined as a variance, hence it vanishes only for degenerate distributions. In particular, $\log \frac{f(x)}{g(x)}$ need to be constant for $x\in S$, i.e.,
\begin{equation}
\nonumber
\frac{f(x)}{g(x)}= a, \ \ \ x\in S,
\end{equation} 
where $a$ is a non-negative constant. In view of the normalization condition, we have $a=1$ and then $X$ and $Y$ are identically distributed.
\end{proof}

\begin{remark}
Proposition \ref{propdiv} enables to consider $VarK$ as a measure of divergence since it shares the positive-definiteness property with the Kullback-Leibler divergence. Moreover, as the Kullback-Leibler divergence, it can not be considered as a metric since it is not symmetric and does not satisfy the triangle inequality. The former is quite intuitive from the definition whereas the latter is shown by the following counterexample. Let $X$, $Y$ and $Z$ follow the Power distribution with parameters $0.5$, $3$ and $2$ and let us denote the pdf's with $f$, $g$, and $h$, respectively. An easy computation gives
\begin{equation}
\nonumber
VarK(f:g)=25, \ \  VarK(f:h)=9, \ \ VarK(h:g)=0.25,
\end{equation}
so that
\begin{equation}
\nonumber
VarK(f:g)> VarK(f:h)+VarK(h:g)
\end{equation}
and hence the triangle inequality is not satisfied.
\end{remark}

For furher developments, it could be possible to analyze the relationships among this new divergence measure and well-known measures as Kullback-Leibler, Rényi, Cressie-Read and Chernoff $\alpha$ divergences (see \cite{bedbur} for their definitions).

\section{VarK applications in detecting the underlying distribution}
The Kullback-Leibler divergence is a measure of similarity between two distributions. If we consider $X$ distributed as the data, then we can choose $Y$ in different ways in order to compare the values of $K(f:g)$, where $f$ and $g$ are the pdf's of $X$ and $Y$, respectively. Of course, a lower value of Kullback-Leibler divergence corresponds to an higher similarity of the distributions of $Y$ and data. There may be situations in which $Y_1$ and $Y_2$ are two different random variables with pdf's $g_1$ and $g_2$, respectively, and such that $K(f:g_1)\simeq K(f:g_2)$. In this case we can choose the more suitable distribution by considering $VarK$, in the sense that we could prefer a distribution with a lower variance even if it has an higher value of $K$.

In order to obtain a criterion based on Kullback-Leibler divergence and the related dispersion index, we set a threshold $r$ such that if $K(f:g_i)$, $i=1,2$ exceeds the value $r$ we can not accept such a distribution. To fix ideas, let us suppose $K(f:g_1)\leq K(f:g_2)$. Moreover, consider the case in which $K(f:g_2)<r$. As $K(f:g_2)$ tends to $r$ it becomes more difficult to prefer $Y_2$ to $Y_1$, but we can tolerate an higher value of the Kullback-Leibler divergence if we balance with a lower value of variance. Then, we use $VarK$ in order to standardize the difference between $r$ and $K$ and make comparisons. We prefer $Y_2$ to $Y_1$ if the following inequality is satisfied
\begin{equation}
\label{crit0}
\frac{r-K(f:g_1)}{\sqrt{VarK(f:g_1)}}<\frac{r-K(f:g_2)}{\sqrt{VarK(f:g_2)}}.
\end{equation}

\begin{remark}
Observe that the criterion in \eqref{crit0} is reasonable since when $K(f:g_1)=K(f:g_2)$, the variable with lower $VarK$ is preferred. Moreover, with the same variance, the variable with lower Kullback-Leibler divergence is still preferred. Finally, if $Y_1$ has lower both $K$ and $VarK$, it will be preferred to $Y_2$.
\end{remark}

In order to apply the criterion to concrete situations, we have to choose a value for the threshold $r$. It could be not convenient to fix a numerical value for $r$ but we can relate this quantity to the Kullback-Leibler divergences. In particular, we choose $r=2K(f:g_1)$, where $K(f:g_1)\leq K(f:g_2)$. Hence, the criterion in \eqref{crit0} can be reformulated in the following way: $Y_2$ is preferred to $Y_1$ if the following inequality is satisfied
\begin{equation}
\nonumber
\frac{K(f:g_1)}{\sqrt{VarK(f:g_1)}}<\frac{2K(f:g_1)-K(f:g_2)}{\sqrt{VarK(f:g_2)}}
\end{equation}
which is equivalent to
\begin{equation}
\label{crit}
K(f:g_2)<\left(2-\sqrt{\frac{VarK(f:g_2)}{VarK(f:g_1)}}\right)K(f:g_1).
\end{equation}

\begin{remark}
\label{discreteKL}
The same dispersion index given in Definiton \ref{def2} can be introduced also in the discrete case. When we have two discrete probability distributions $P$ and $Q$ defined on the same probability space $\mathcal {X}$, the Kullback-Leibler divergence of $P$ and $Q$ is defined as:
\begin{equation}
\label{KLdis}
K(P:Q)=\sum_{x \in \mathcal {X}} P(x) \log \frac{P(x)}{Q(x)}.
\end{equation}
The corresponding index of dispersion is
\begin{equation}
\label{Var-KLdis}
VarK(P:Q)=\sum_{x \in \mathcal {X}} P(x) \log^2 \frac{P(x)}{Q(x)}-[K(P:Q)]^2.
\end{equation}
\end{remark}

In the following, we give three applications of the above method in different scenarios. In the first one, we will have Kullback-Leibler divergences which do not satisfies the similarity property. In the second one, we will present the case in which we have two equal Kullback-Leibler divergences. In the third one, we will present the more critical situation, i.e., we will find a distribution with lower $K$ but with higher $VarK$.

\begin{example}
\label{exmon}
Consider in Table \ref{tab1} the data obtained from 200 repetitions of the experiment consisting in tossing 3 times a coin and recording how many times we get head.
\begin{table}[h]
\caption{Data of Example \ref{exmon}.}
\centering
\begin{tabular}{ccccc}
\hline
\textbf{Number of heads} 	 & 0	& 1 & 2 & 3 \\
\hline
\textbf{Number of observations} & 20 & 63 & 84 &  33 \\
\hline
\label{tab1}
\end{tabular}
\end{table}

If we denote by $X$ the random variable distributed as the data, from Table \ref{tab1} we get the distribution of $X$ as
\begin{equation*}
p_0=\mathbb P(X=0)=0.1, \ \ p_1=0.315, \ \ p_2=0.42, \ \ p_3=0.165.
\end{equation*}
Our intention is to establish a suitable distribution for the data, so we evaluate Kullback-Leibler divergence and its variance between $X$ and three different distributions $Y_1,Y_2,Y_3$, with probability mass functions $P,Q_1,Q_2,Q_3$, respectively. In particular, $Y_1$ follows a binomial distribution $B(3,0.55)$, where $0.55$ is obtained by maximum likelihood estimation, $Y_2$ follows a beta-binomial distribution with parameters $n=3$, $\alpha=12$ and $\beta=10$, and $Y_3$ follows a discrete uniform distribution over four elements. The values of Kullback-Leibler divergence and its variance are presented in Table \ref{tab2}.
\begin{table}[h]
\caption{$K(P:Q_i)$ and $VarK(P:Q_i)$, $i=1,2,3$, in Example \ref{exmon}.}
\centering
\begin{tabular}{lcc}
\hline
\textbf{Distribution} 	 & $K(P:Q)$	& $VarK(P:Q)$ \\
\hline
Binomial& 0.0011 & 0.0023\\
Beta-binomial & 0.0027 & 0.0054 \\
Uniform & 0.1305 & 0.2253 \\

\hline
\label{tab2}
\end{tabular}
\end{table}
Since the binomial distribution has lower Kullback-Leibler divergence and lower $VarK$, we can conclude that the binomial distribution is more appropriate than the Beta-binomial and the discrete uniform ones. Along the same lines, the Beta-binomial is preferred to the discrete uniform. 
\end{example}

\begin{example}
\label{ess}
Consider the real data (see Data Set 4.1 \cite{murthy}) which concern times till failures for 20 units: $11.24, 1.92, 12.74, 22.48, 9.60, 11.50, 8.86, 7.75, 5.73$, $9.37$, $30.42, 9.17, 10.20,$  $5.52, 5.85, 38.14, 2.99, 16.58, 18.92, 13.36$. The data are distributed as the random variable $X$ whose pdf is $f$. We estimate the density function through a kernel estimator with {\it MATLAB} function \texttt{ksdensity}. In order to establish if the distribution of the data is similar to a Weibull distribution $W2(\alpha,\lambda)$ with pdf 
\begin{equation*}
g(x)=\lambda \alpha x^{\alpha-1} \exp\left(-\lambda x^{\alpha}\right), \ \ x>0,
\end{equation*} 
we consider two different Weibull distribution, $Y_1\sim W2(1.5487,0.0166)$, with parameters given by maximum likelihood method, and $Y_2\sim W2(1.6,0.0127)$. In Figure \ref{weib} we present the plot of the estimated pdf of data and pdf's $g_1,g_2$ of $Y_1,Y_2$. 
\begin{figure}[ht]
\centering
\includegraphics[scale=0.7]{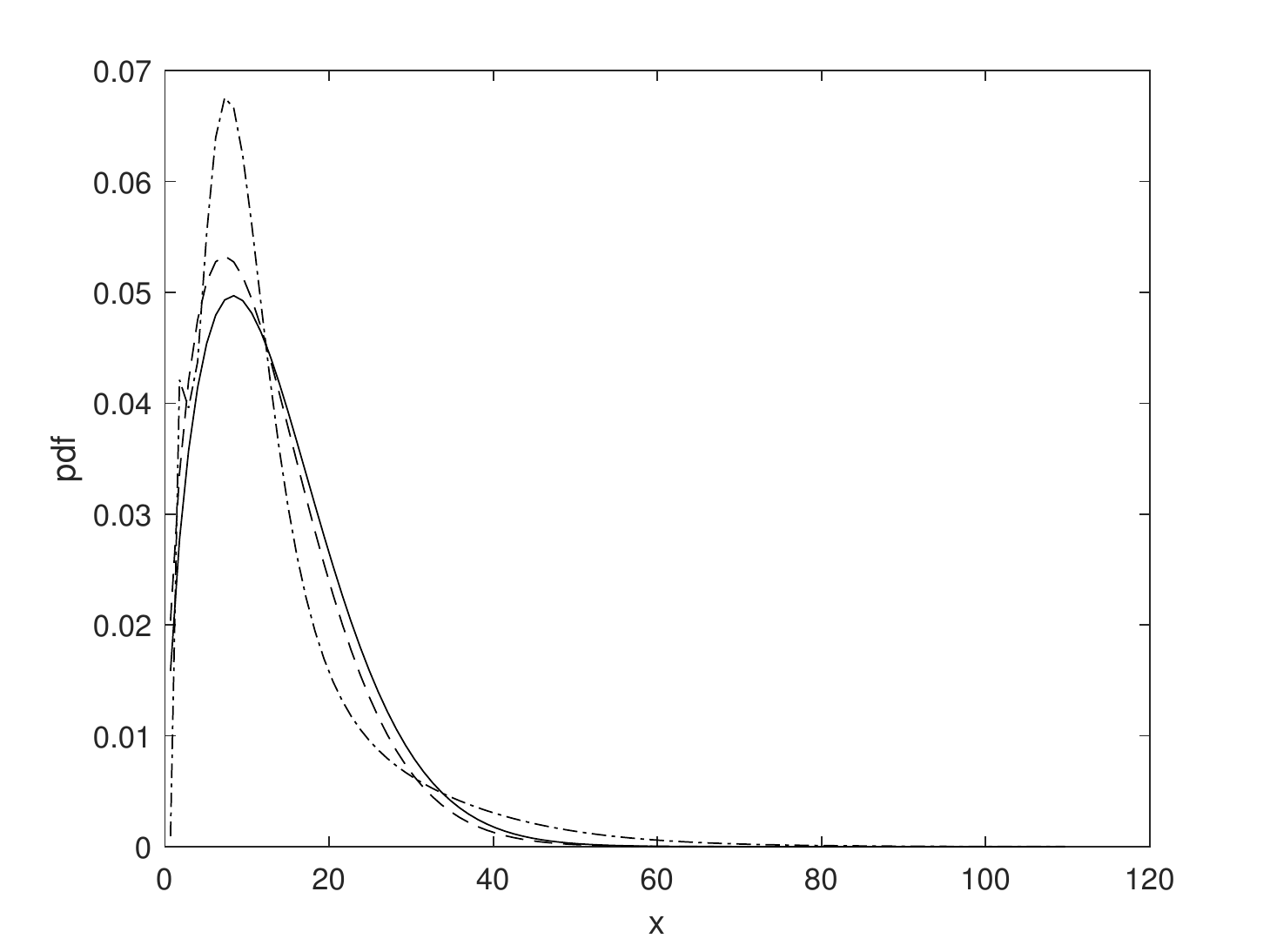}
\caption{Plot of pdf's of $X,Y_1,Y_2$ in Example \ref{ess} (dash-dot, dashed and solid line, respectively).}
\label{weib}
\end{figure}
With these distributions we obtain 
\begin{equation*}
K(f:g_1)=K(f:g_2)=0.0990.
\end{equation*}
Hence, in order to choose the more suitable distribution we have to compare the values of $VarK$ and we obtain
\begin{equation}
\nonumber
VarK(f:g_1)=0.3350 \ > \ VarK(f:g_2)=0.2936,
\end{equation}
and then we choose $Y_2$ since its Kullback-Leibler divergence has a lower variability.
\end{example}

\begin{example}
\label{exfin}
Consider the crab dataset given in \cite{murphy}. We focus on the distribution of the width of female crabs, represented by the random variable $X$ with pdf $f$, hence we have a sample of 100 units. We estimate the density function through a kernel estimator with {\it MATLAB} function \texttt{ksdensity}. We intend here to compare the distribution of the data with Weibull and Log-normal distributions. We recall that if $Y_2\sim Lognormal(\mu,\sigma)$, then the pdf is given by
\begin{equation}
\nonumber
g_2(x)=\frac{1}{x\sigma\sqrt{2\pi}}\exp\left(-\frac{(\log x-\mu)^2}{2\sigma^2}\right), \ \ x>0.
\end{equation}
In particular, by using the maximum likelihood estimation, we choose $Y_1\sim W2(5.6162,$ $1.1953e-09)$ and $Y_2\sim Lognormal(3.5559,0.2192)$.  In Figure \ref{figfin} we present the plot of the estimated pdf of data and pdf's of $Y_1,Y_2$. 
\begin{figure}[ht]
\centering
\includegraphics[scale=0.7]{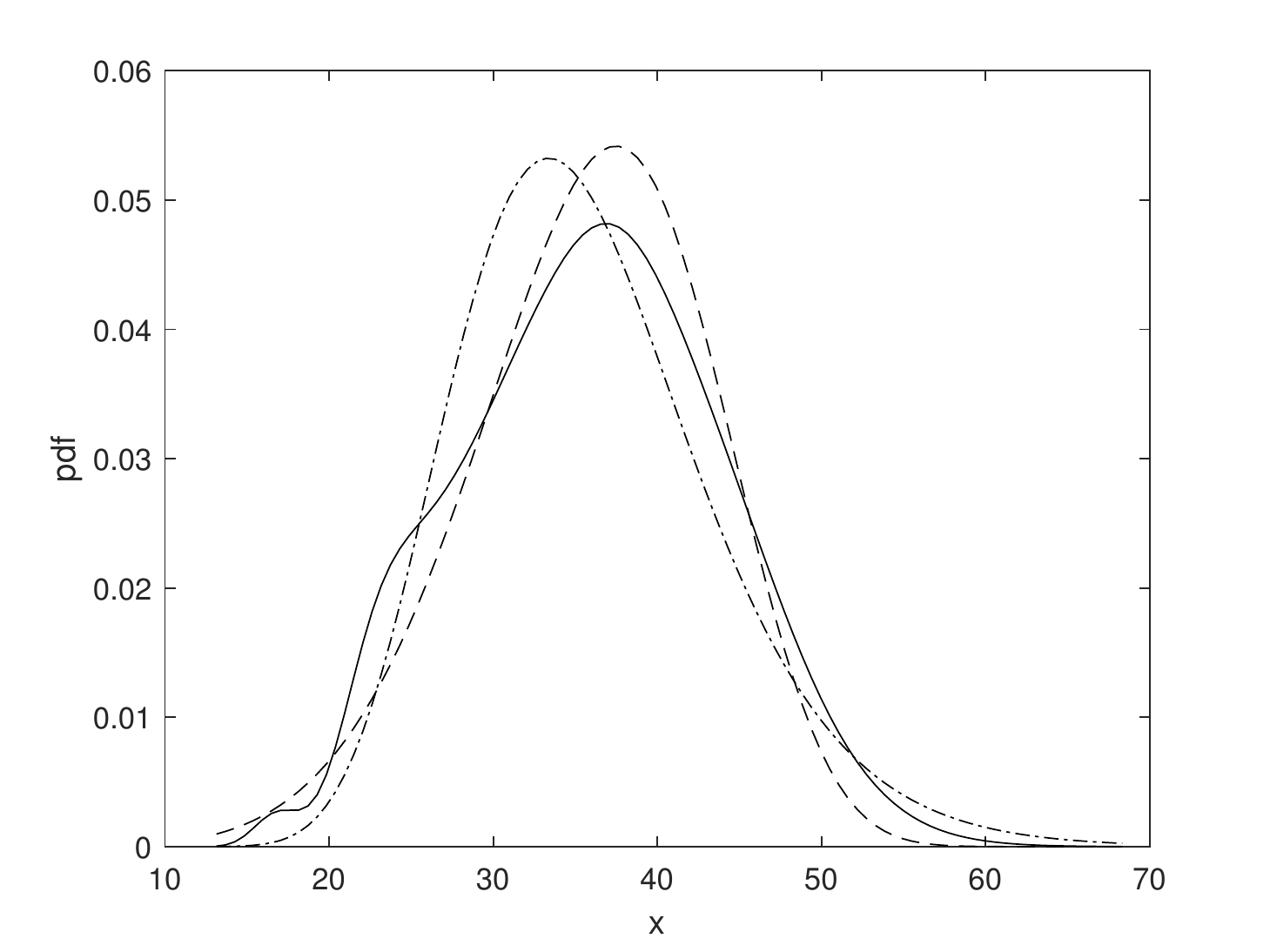}
\caption{Plot of pdf's of $X,Y_1,Y_2$ in Example \ref{exfin} (solid, dashed and dash-dot, respectively).}
\label{figfin}
\end{figure}
With these distributions we obtain 
\begin{align*}
&K(f:g_1)=0.0381, \ \ \ \ \  \ \ K(f:g_2)=0.0420, \\
&VarK(f:g_1)=0.1148, \ \ VarK(f:g_2)=0.0924.
\end{align*}
Hence, we are in the case in which $Y_1$ has lower Kullback-Leibler divergence but higher $VarK$ and the difference between $K(f:g_2)$ and $K(f:g_1)$ is small enough. Then, in order to choose the most suitable distribution, we use the criterion given in \eqref{crit} and compute the difference
\begin{equation}
\nonumber
K(f:g_2)-\left(2-\sqrt{\frac{VarK(f:g_2)}{VarK(f:g_1)}}\right)K(f:g_1)=-3.8085e-05.
\end{equation}
Thus the inequality in \eqref{crit} is satisfied and we can choose $Y_2$ as the distribution that fits the data in the best way.
\end{example}

\section{Conclusion}
In this paper we have introduced new measures of variability for some measures of uncertainty, in particular for the Kerridge inaccuracy measure and the Kullback-Leibler divergence. We have defined a dispersion index based on the Kerridge inaccuracy, $VarI$, named varinaccuracy. We have discussed the effect of linear transformations and strictly monotone functions on varinaccuracy and then lower bounds have been presented. A dispersion index of Kullback-Leibler divergence, $VarK$, and a connection among varentropy, varinaccuracy and $VarK$ have been introduced. Since the Kullback-Leibler divergence is a measure of similarity between two distributions, $VarK$ has been used to compare two distributions chosen to fit the data. In order to obtain a criterion based on Kullback-Leibler divergence and its variance, we have used the mean-variance rule and some examples have been illustrated. Further analysis of these dispersion indices could be done in order to compare distributions under different assumptions.

\section*{Acknowledgements}
Francesco Buono, Camilla Calì and Maria Longobardi are members of the research group GNAMPA of INdAM (Istituto Nazionale di Alta Matematica). Francesco Buono and Maria Longobardi are partially supported by MIUR - PRIN 2017, project ‘‘Stochastic Models for Complex Systems’’, no. 2017 JFFHSH.



\begin{thebibliography}{3}

\bibitem[Arizono and Ohta (1989)]{arizono}
Arizono, I., Ohta, H. (1989). \textit{A test for normality based on Kullback–Leibler information}. The American Statistician, \textbf{43}, 20--22.

\bibitem[Balakrishnan et al. (2007)]{balakrishnan}
Balakrishnan, N., Rad, A.H., Arghami, N.R. (2007). \textit{Testing exponentiality based on Kullback-Leibler information with progressively Type-II censored data}.
IEEE Transactions on Reliability, \textbf{56}, 349--356.

\bibitem[Bedbur and Kamps (2021)]{bedbur}
Bedbud, S. Kamps, U. (2021). \textit{On Representations of Divergence Measures and Related Quantities in Exponential Families.} Entropy, \textbf{23}, 726.






\bibitem[Fradelizi et al. (2016)]{fradelizi}
Fradelizi, M., Madiman, M., Wang, L. (2016). \textit{Optimal Concentration of Information Content for Log-Concave Densities}. High Dimensional Probability VII, 45--60.

\bibitem[Ghosh and Kundu (2018)]{ghosh}
Ghosh, A., Kundu, C. (2018). \textit{On Generalized Conditional Cumulative Past Inaccuracy Measure}. Applications of Mathematics, \textbf{63}(2), 167--193.

\bibitem[Goodarzi et al. (2017)]{goodarzi}
Goodarzi, F., Amini, M., Borzadaran, G. (2017). \textit{Characterizations of continuous distributions through inequalities involving the expected values of selected functions}. Applications of Mathematics, \textbf{62}, 493--507.

\bibitem[Kerridge (1961)]{kerridge}
Kerridge, D. F. (1961). \textit{Inaccuracy and inference}. J. R. Stat. Soc., Ser. B, \textbf{23}, 184--194.

\bibitem[Khorashadizadeh (2018)]{khorashadizadeh}
Khorashadizadeh, M. (2018). \textit{More Results on Dynamic Cumulative Inaccuracy Measure}. Journal of the Iranian Statistical Society, \textbf{17}, 89--108.


\bibitem[Kullback and Leibler (1951)]{kullback}
Kullback, S., Leibler, R.A., (1951). \textit{On information and sufficiency}. Ann. Math. Statist., \textbf{22}, 79--86.

\bibitem[Kumar et al. (2011)]{kumar}
Kumar, V., Taneja, H.C., Srivastava, R. (2011). \textit{A dynamic measure of inaccuracy between two past lifetime distributions}. Metrika, \textbf{74}, 1--10.

\bibitem[Kundu et al. (2016)]{kundu}
Kundu, C., Di Crescenzo, A., Longobardi, M. (2016). \textit{On cumulative residual (past) inaccuracy for truncated random variables}. Metrika, \textbf{79}, 335--356.




\bibitem[Murphy and Aha (1994)]{murphy}
Murphy, P.M., Aha, D.W. (1994). \textit{UCI Repository of machine learning databases}, http://www.ics.uci.edu/~mlearn/MLRepository.html. Irvine, CA: University of California,  Department of Information and Computer Science.

\bibitem[Murthy et al. (2004)]{murthy}
Murthy, D.N.P., Xie, M., Jiang, R. (2004). \textit{Weibull models}. Hoboken: Wiley.

\bibitem[Park et al. (2012)]{park}
Park, S., Rao, M., Shin, D.W. (2012). \textit{On cumulative residual Kullback-Leibler information}. Statistics and Probability Letters, \textbf{82}, 2025--2032.


\bibitem[Shannon (1948)]{shannon}
Shannon, C. E. (1948). \textit{A mathematical theory of communication}. Bell System Technical Journal, \textbf{27}, 379--423.

\bibitem[Sunoj et al. (2017)]{sunoj}
Sunoj, S.M., Sankaran, P.G., Unnikrishnan, N. (2017). \textit{Quantile-based cumulative Kullback–Leibler divergence}. Statistics, \textbf{52}, 1--17.


\bibitem[Taneja et al. (2009)]{tanejak}
Taneja, H.C., Kumar, V., Srivastava, R. (2009). \textit{A dynamic measure of inaccuracy between two residual lifetime distributions}. International Mathematical
Forum, \textbf{4}, 1213--1220.

\bibitem[Taneja and Tuteja (1986)]{taneja}
 Taneja, H.C., Tuteja, R.K. (1986). \textit{Characterization of a quantitative-qualitative measure of inaccuracy}. Kybernetika, \textbf{22}, 393--402.




\end{thebibliography}
\end{document}